\documentclass[reqno]{amsart} 
\usepackage{amsfonts, amsmath, amsthm, amssymb, latexsym, graphicx, geometry, xcolor, colortbl,  mathtools}
\usepackage{color}

\def\b{\mathbb }
\def\cal{\mathcal }

\theoremstyle{plain}
\newtheorem{theorem}{Theorem}[section]
\newtheorem{corollary}[theorem]{Corollary}
\newtheorem{lemma}[theorem]{Lemma}
\newtheorem{proposition}[theorem]{Proposition}
\newtheorem{def-theorem}[theorem]{Definition and Theorem}
\theoremstyle{definition}
\newtheorem{definition}[theorem]{Definition}

\newtheorem{example}[theorem]{Example}

\numberwithin{equation}{section}

\title[A central limit theorem for Bessel  and Dunkl processes with drift]{A central limit theorem for Bessel  and Dunkl processes with drift}

\author{ Michael Voit} 
\address{Fakult\"at Mathematik, Technische Universit\"at Dortmund,
          Vogelpothsweg 87,
          D-44221 Dortmund, Germany}
\email{ michael.voit@math.tu-dortmund.de}

\subjclass[2010]{Primary 60B20; Secondary 60F05, 60F15,  33C45, 60J60, 60K35,  70F10, 82C22}
\keywords{Multivariate Bessel functions, Dunkl processes, Bessel processes with drift, Dyson Brownian motions with drift,
   central limit theorems.}

\begin{document}
\date{\today}

\begin{abstract} For some discrete parameters, multivariate (Dunkl-)Bessel processes on Weyl chambers $C$
 are  projections of Brownian motions  on Euclidean spaces $V$;
  the most prominent examples are Dyson Brownian motions. More generally,
  the projections of Brownian motions on $V$ with drift are Bessel processes with drifts on $C$, where
 again  the associated transition densities  can be described in terms of multivariate Bessel functions.
  Moreover,  Dunkl processes with drift can be defined in an analogous  way.

  In this paper we prove  central limit theorems for these Bessel and Dunkl processes with drift for
  regular drift vectors, and arbitrary root systems and multiplicities.
  These results improve recent results of the author.
\end{abstract}

\maketitle

\section{Introduction}

In this paper we prove a central limit theorem (CLT) for multivariate Dunkl and Bessel processes
with regular drift vectors which were partially conjectured in  \cite{V}. The proof will be based on some
limit result for Dunkl kernels which is based on some estimates for these kernels in \cite{R3}.
In the one-dimensional case, a version of the CLT was proved in  \cite{V}.

To explain  Dunkl and Bessel processes without and with drift 
as well as the associated special functions, i.e.,  Dunkl kernels and Bessel functions associated with root systems, we recapitulate some notations
from
\cite{D, R2, R4, RV1, RV3, CGY, GY1, GY2,  Ka, A}.
Let $\mathbb R^N$ ($N\ge1$) be
equipped with  the usual scalar product   $\langle\,.\,,\,.\,\rangle$, the associated 2-norm $\|.\|$,
and the standard basis $e_1,\ldots,e_N$.
Let $R\subset \mathbb R^N\setminus\{0\}$ be a  root system on $\mathbb R^N$
i.e., $R$ is a finite set of vectors $\alpha$, such that the reflections
 $\sigma_\alpha$  on the  hyperplanes perpendicular to $\alpha$  
generate a finite group $W$ of orthogonal transformations on  $\mathbb R^N$ with $\sigma_\alpha(R)=R$
for   $\alpha\in R$. 
Fix some positive subsystem $R_+\subset R$, the associated 
 closed Weyl chamber
 $$C:=\{ x\in \mathbb R^N: \quad \langle x, \alpha\rangle\ge 0 \quad\text{for all}\quad \alpha\in R_+\},$$ and
 some multiplicity function $k:R\to[0,\infty[$, i.e.,
  $k$ is invariant under the action of $W$ on $R$. For instance, for the root systems
  $A_{N-1}$ and $D_N$ on  $\mathbb R^N$, this $k$ is  a single parameter $k\in [0,\infty[$, and, for
 the root system $B_N$ ($N\ge 2$),   $k$ consists of 2 parameters 
  $(k_1,k_2)$. The chamber $C$     
         consists of representatives of the orbits under the action of $W$ on  $\mathbb R^N$.
We now consider
the associated Dunkl operators 
\begin{equation}\label{def-dunkl-op}
  T_\xi(k) f(x)   = \partial_{\xi}  f(x)+\sum_{\alpha\in R_+}
  k(\alpha)\cdot \langle\alpha,\xi\rangle \frac{1}{\langle\alpha, x\rangle} (f(x)-f(\sigma_\alpha(x)))
\end{equation} 
 for  $x,\xi \in \mathbb R^N$ and the directional derivatives $\partial_{\xi}$.
The  $T_\xi(k)$ form a  commuting family of  operators which are homogeneous of degree $-1$ on 
the space 
of all polynomials in $N$ variables such that, like for the classical directional derivatives for $k=0$,
we have $T_\xi(k) f\in {\cal C}^{(r-1)}(\mathbb R^N)$ for all 
$f\in  {\cal C}^{(r)}(\mathbb R^N)$ and $r=1,2,\ldots$.
Consider the associated Dunkl kernels $E_k(z,w)$ ($z,w\in \mathbb C^N$) where the functions $E(.,w)$ are characterized as the
unique holomorphic solutions of the joint eigenvalue problem 
\begin{equation}\label{joint-Dunkl-eigenvalues}
  T_\xi(k) f\,=\, \langle\xi,w\rangle f \quad (\xi\in \b C^N) \quad\text{with}\quad  \quad f(0)=1.
\end{equation}
The $E_k$ have the following properties for  $x,y\in \mathbb C^N$:
\begin{enumerate}\itemsep=-1pt
\item[\rm{(1)}] $E_k(x,y) = E_k(y,x).$ 
\item[\rm{(2)}]  $E_k(\lambda x,y) = E_k(x,\lambda y)$ for $ \lambda\in \mathbb C$.
\item[\rm{(3)}]  $0<E_k( x,y) \le \max_{g\in W} e^{\langle x, gy\rangle}$ and  $|E_k( x,iy)|\le 1$ for $x,y\in \mathbb R^N$.
 \item[\rm{(4)}]  $E_k(gx,gy) = E_k(w,y)$ for $g\in W$.
\end{enumerate}
The  Bessel functions associated the Dunkl kernels are given  as the symmetrizations
$$ J_k(z,w) := \frac{1}{|W|}\sum_{g\in W} E_k(z,gw)  \quad\quad (z,w\in\mathbb C^N).$$
We next consider the  Dunkl-Laplacian $\Delta_k:=\sum_{j=1}^N T_{e_j}^2$ which is explicitly given by
\begin{equation}\label{Dunkl-laplace}
 \Delta_k f(x) = \Delta f(x)  + \,2\sum_{\alpha\in R_+}
k(\alpha)\Bigl(
\frac{\langle\nabla f(x),\alpha\rangle}{\langle\alpha ,x\rangle} -\frac{ \|\alpha\|^2}{2}
\frac{f(x)-f(\sigma_\alpha x)}{\langle\alpha , x\rangle^2}\Bigr).
\end{equation}
 $\frac{1}{2}\Delta_k$ is the generator of a Feller process
$(X_{t,k})_{t\ge0}$ on $\mathbb R^N$ with RCLL paths which generalizes Brownian motions, which appear for $k=0$.
 These processes have
 the transition  probabilities
\begin{equation}\label{density-transition-dunkl-gen}
K_t^k(x,A)=c_k \int_A \frac{1}{t^{\gamma+N/2}} e^{-(\|x\|^2+\|y\|^2)/(2t)} E_k(\frac{x}{\sqrt{t}}, \frac{y}{\sqrt{t}}) 
\cdot w_k(y)\> dy
\end{equation}
for $t>0$, $x\in \mathbb R^N$, and Borel sets $A\subset \mathbb R^N$
with  the  $W$-invariant  weight function
\begin{equation}\label{def-wk-A-general}
   w_k(x) := \prod_{\alpha\in R_+} |\langle\alpha,x\rangle|^{2k(\alpha)}.
\end{equation}
 which is homogeneous of  degree $2\gamma$ with 
$\gamma:=\sum_{\alpha\in R_+}k(\alpha)$, where
 $c_k>0$ is a explicitly known  normalization; see for instance   \cite{M} for these constants
 for the root systems $A_{N-1}$ and $B_N$.
The processes $(X_{t,k})_{t\ge0}$
are called  Dunkl processes associated with $R$ and $k$ (without drift).

 Furthermore,  the operator
 \begin{equation}\label{Bessel-laplace}
  L_k f(x):= \frac{1}{2}\Delta_k f(x)=\frac{1}{2}\Delta f(x)  + \sum_{\alpha\in R_+}
k(\alpha)
\frac{\langle\nabla f(x),\alpha\rangle}{\langle\alpha ,x\rangle}
\end{equation}
for $W$-invariant $f\in C^{(2)}(\mathbb R^N)$ is the generator of a Feller diffusion
$(X_{t,k})_{t\ge0}$ on the closed chamber
$C$ with continuous paths with reflecting boundaries with the transition  probabilities 
\begin{equation}\label{density-transition-bessel-gen}
K_t^k(x,A)=|W|c_k \int_A \frac{1}{t^{\gamma+N/2}} e^{-(\|x\|^2+\|y\|^2)/(2t)} J_k(\frac{x}{\sqrt{t}}, \frac{y}{\sqrt{t}}) 
\cdot w_k(y)\> dy
\end{equation}
for $t>0$, $x\in C$, and $A\subset C$. These diffusions are called Bessel processes on $C$ (without drift).

These Dunkl and Bessel processes without drift were extended to processes with drift in a systematic way in \cite{V}.
This construction is motivated by examples of such Bessel processes, which appear as projections of
Brownian motions with drift on special Euclidean spaces like the spaces of complex $N\times N$ Hermitian matrices as in \cite{AV},
and similar constructions for 1-dimensional Bessel processes in \cite{PY}, and for Wishart processes in \cite{DDMY}.
The Dunkl and Bessel processes with drift are defined as follows for $R,R_+, W, C, k\ge0$ as above:

\begin{definition}\label{def-bessel-p-drift}
 Let $\lambda\in C$ be a ``drift vector''. Then the operator
   \begin{equation}\label{Bessel-laplace-drift-gen}
  L_k^\lambda f(x):= \frac{1}{2}\Delta f(x)  + \sum_{\alpha\in R_+}
k(\alpha)
\frac{\langle\nabla f(x),\alpha\rangle}{\langle\alpha ,x\rangle}+\frac{\langle \nabla_x J_k(x,\lambda), \nabla f(x)\rangle }{J_k(x,\lambda)}
   \end{equation}
   for $W$-invariant $f\in C^{(2)}(\mathbb R^N)$ is the generator of a Feller diffusion on $C$ with reflecting boundaries and the
  transition densities
   \begin{equation}\label{density-transition-bessel-drift-gen}
  K_t^{\lambda,k}(x,A)=\frac{|W|\cdot c_k\cdot e^{-\|\lambda\|^2 t/2}}{t^{\gamma+N/2}} \int_A e^{-(\|x\|^2+\|y\|^2)/(2t)}
 \frac{ J_k(\frac{x}{\sqrt{t}}, \frac{y}{\sqrt{t}}) \cdot J_k(y,\lambda)}{J_k(x,\lambda)}
\cdot w_k(y)\> dy
\end{equation}
for $t>0$, $x\in C$ and  Borel sets $A\subset  C$.
These diffusions  have modifications with continuous paths; they
are  called Bessel processes of type $R$ with multiplicity $k$ and drift $\lambda$.   
\end{definition}

 \begin{definition}\label{def-dunkl--p-drift}
     Let $\lambda\in C$ be a ``drift vector''.
   The operator
   \begin{align}\label{Dunkl-laplace-drift-gen}
  L^{\lambda, Dunkl}_k f(x):= &\frac{1}{2}\Delta f(x)  + \sum_{\alpha\in R_+}
k(\alpha)
\frac{\langle\nabla f(x),\alpha\rangle}{\langle\alpha ,x\rangle} + 
\frac{\langle \nabla_x E_k(x,\lambda), \nabla f(x)\rangle }{E_k(x,\lambda)}\\
&-\sum_{\alpha\in R_+}\frac{k(\alpha)\|\alpha\|^2}{2}\cdot \frac{ E_k(\sigma_\alpha x,\lambda)}{ E_k(x,\lambda)}\cdot
\frac{f(x)-f(\sigma_\alpha x)}{\langle\alpha , x\rangle^2} \notag
   \end{align}
   for  $f\in C^{(2)}(\mathbb R^N)$ is the generator of a Feller process on $\mathbb R^N$ with
   the transition densities
   \begin{equation}\label{density-transition-dunkl-drift-gen}
  K_t^{\lambda,k,Dunkl}(x,A)=\frac{ c_k\cdot e^{-\|\lambda\|^2 t/2}}{t^{\gamma+N/2}} \int_A e^{-(\|x\|^2+\|y\|^2)/(2t)}
 \frac{ E_k(\frac{x}{\sqrt{t}}, \frac{y}{\sqrt{t}}) \cdot E_k(y,\lambda)}{E_k(x,\lambda)}
\cdot w_k(y)\> dy
\end{equation}
for $t>0$, $x\in \mathbb R^N$ and $A\subset \mathbb R^N$.
These Feller processes have RCLL modifications.
Such processes are called Dunkl processes of type $R$ with multiplicity $k$ and drift $\lambda$.
 \end{definition}

 By \cite{V}, Dunkl processes $(X_t^\lambda)_{t\ge0}$ with arbitrary drift vectors $\lambda\in\mathbb R^N$ and start in 0
 have the property that $(X_t^\lambda-t\lambda)_{t\ge0}$ are  martingales w.r.t.~the canonical filtration, a fact which supports the
 name of these processes. Moreover, some strong law of large numbers holds for these proceses; see \cite{V}.
 The purpose of this note is to prove the following associated CLTs for Dunkl and Bessel processes with regular drift $\lambda$,
 i.e.,  $\lambda$ is not contained in the boundary of some Weyl chamber.

\begin{theorem}\label{clt-dunkl}
  For any Dunkl process  $(X_t^\lambda)_{t\ge0}$ with $k\ge0$, 
regular    drift  vector $\lambda$,  and any starting point $x\in\mathbb R^N$,
  the random variables    $(X_t^\lambda-t\lambda)/\sqrt t$ tend in distribution
to the normal distribution $N(0,I_N)$ for $t\to\infty$.
 \end{theorem}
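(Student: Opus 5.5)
We will prove the statement by computing characteristic functions and invoking L\'evy's continuity theorem. Fix $x\in\mathbb R^N$ and $u\in\mathbb R^N$, and set $y=t\lambda+\sqrt t\,z$ in the transition density (\ref{density-transition-dunkl-drift-gen}). Rather than working directly with this density, we will use a Dunkl-type Laplace transform. The key identity is the generalized Gaussian integral (Macdonald--Dunkl formula)
\[
c_k\int_{\mathbb R^N} e^{-\|y\|^2/(2t)}E_k\Bigl(\frac{x}{\sqrt t},\frac{y}{\sqrt t}\Bigr)E_k(y,w)\,w_k(y)\,dy
= t^{\gamma+N/2}\,e^{\|x\|^2/(2t)}\,e^{t\langle w,w\rangle/2}\,E_k(x,w),
\]
valid for all $w\in\mathbb C^N$ by analytic continuation. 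Applying it with $w=\lambda+s$ for $s\in\mathbb C^N$ gives
\[
\mathbb E\bigl[E_k(X_t^\lambda,\lambda+s)/E_k(X_t^\lambda,\lambda)\bigr]
= e^{t(\langle s,s\rangle+2\langle\lambda,s\rangle)/2}\,\frac{E_k(x,\lambda+s)}{E_k(x,\lambda)}.
\]
Choosing $s=iu/\sqrt t$ produces an exact formula. However, its left-hand side involves the ratio $E_k(y,\lambda+s)/E_k(y,\lambda)$ rather than $e^{i\langle y,u\rangle/\sqrt t}$, so the ratio must be approximated.

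The second step is therefore an asymptotic result for Dunkl kernels. For regular $\lambda$, for $y$ in a cone around $\lambda$ (roughly, $y$ in the open chamber containing $\lambda$ with $\|y\|\to\infty$), and for $s=O(1/\sqrt t)$ and $\|y\|=O(t)$, we aim to show
\[
\frac{E_k(y,\lambda+s)}{E_k(y,\lambda)} = e^{\langle y,s\rangle}\bigl(1+o(1)\bigr).
\]
We would derive this from the known asymptotic $E_k(y,w)\sim c\,e^{\langle y,w\rangle}/\bigl(w_k(y)w_k(w)\bigr)^{1/2}$, valid when $y$ and $w$ lie in the same open chamber with $\langle y,w\rangle\to\infty$, together with the uniform estimates of R\"osler cited as \cite{R3}. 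Because $\lambda$ is regular, $\lambda+s$ stays in the same open chamber for large $t$. The factor $w_k(\lambda+s)/w_k(\lambda)\to1$, and the $y$-dependent prefactors cancel in the ratio. Uniformity in $y$ is the delicate part, since the error term must be controlled for complex $s$.

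The third step is localization. By the strong law of large numbers from \cite{V}, or more simply by the martingale property of $X_t^\lambda-t\lambda$ combined with a second-moment computation (differentiate the identity above twice at $s=0$), we get $X_t^\lambda/t\to\lambda$ in probability. Hence with probability tending to one, $X_t^\lambda$ lies in the region where the kernel asymptotics hold. On that region the ratio equals $e^{i\langle X_t^\lambda,u\rangle/\sqrt t}(1+o(1))$. Off that region, both the ratio and the exponential are bounded by a constant via property (3) and the ratio version of the estimates, so this part contributes $o(1)$.

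Finally, we combine the steps. The right-hand side equals
\[
e^{-\|u\|^2/2+i\sqrt t\langle\lambda,u\rangle}\,(1+o(1)),
\]
since $E_k(x,\lambda+iu/\sqrt t)\to E_k(x,\lambda)$ by continuity. Multiplying by $e^{-i\sqrt t\langle\lambda,u\rangle}$ gives $\mathbb E\,e^{i\langle (X_t^\lambda-t\lambda)/\sqrt t,\,u\rangle}\to e^{-\|u\|^2/2}$, which is the claim. We expect the main obstacle to be the uniform kernel-ratio asymptotic of the second step for complex arguments; a fallback would be to take $s$ real (a Laplace transform), which requires only real positive kernel estimates, and then deduce convergence in distribution via convergence of moment generating functions in a neighborhood of $0$.
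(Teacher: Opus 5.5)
Your Step~1 identity and the final bookkeeping are correct, but the proof has a genuine gap exactly where you suspect it. Everything rests on the uniform statement $E_k(y,\lambda+iu/\sqrt t)/E_k(y,\lambda)=e^{i\langle y,u\rangle/\sqrt t}+o(1)$ for $\|y/t-\lambda\|<\epsilon$, and the tools you name do not give it. The de Jeu--R\"osler limit and Lemma~\ref{roesler-estimate} from \cite{R3} compare positive heat kernels at \emph{real} arguments. Here the second argument is complex and the phase $\langle y,u\rangle/\sqrt t$ is of order $\sqrt t$, so a real-variable $(1+o(1))$ asymptotic does not transfer without an additional argument.

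Your off-region bound has the same defect. Property (3) concerns $E_k(x,iy)$ with $x,y$ real and says nothing about $|E_k(y,\lambda+iu/\sqrt t)|/E_k(y,\lambda)$. The general bound $|E_k(y,z)|\le\max_{g\in W}e^{\langle gy,\mathrm{Re}\,z\rangle}$, divided by $E_k(y,\lambda)$, is unbounded once $y$ leaves the chamber of $\lambda$; in rank one it grows like $|y|^{k+1}$, by \eqref{limit-1f1}.

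The missing idea is R\"osler's positive integral representation $E_k(y,z)=\int e^{\langle\xi,z\rangle}\,d\mu_y(\xi)$, with probability measures $\mu_y$ supported in the convex hull of $Wy$. It exhibits your ratio as the characteristic function at $u/\sqrt t$ of the tilted probability measure $e^{\langle\xi,\lambda\rangle}d\mu_y(\xi)/E_k(y,\lambda)$, so its modulus is at most $1$. It also reduces the complex claim to a real one. Lemma~\ref{roesler-estimate}, applied as in the proof of Lemma~\ref{short-time} but uniformly on compact sets of regular pairs, gives $E_k(y,\lambda+v)e^{-\langle y,v\rangle}/E_k(y,\lambda)\to\sqrt{w_k(\lambda)/w_k(\lambda+v)}$ uniformly for $\|y/t-\lambda\|<\epsilon$ and real $\|v\|\le\epsilon$. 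These are uniformly bounded exponential moments of $\xi-y$ under the tilted measures, which keeps $\xi-y$ tight and yields the required $o(1)$. Your real-$s$ fallback does not avoid the issue: off the good region the integrands are then unbounded, and you would need large-deviation tail bounds that you do not supply.

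Step~3 is not justified as written either. The strong law of \cite{V} concerns $m_1(X_t^\lambda)$, and $m_1(y)$ is only close to the projection of $y$ to $C$, which does not tell you in which chamber $X_t^\lambda$ lies. Differentiating your identity twice in $s$ with ordinary derivatives produces $\mathbb E[\partial_w^2E_k(X_t,\lambda)/E_k(X_t,\lambda)]$, i.e.\ second moments of the tilted measures. For $k>0$ these are not $\mathbb E[X_tX_t^{T}]$.

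This localization is essential, not technical. Already in rank one with $k>0$, the asymptotics behind \eqref{limit-1f1} show that the ratio behaves like $e^{-iyu/\sqrt t}$ as $y\to-\infty$. So the information you extract near $w=\lambda$ cannot distinguish mass near $t\lambda$ from mass near $-t\lambda$. It can be repaired by applying Dunkl operators in $w$ instead of ordinary derivatives, using $T^w_\xi E_k(y,\cdot)=\langle\xi,y\rangle E_k(y,\cdot)$ and your identity for all $w$. Alternatively, use Dynkin's formula for \eqref{Dunkl-laplace-drift-gen}. Either way you get $\mathbb E X_t^\lambda=x+t\lambda$ and $\mathbb E\|X_t^\lambda-x-t\lambda\|^2=O(t)$, hence $X_t^\lambda/t\to\lambda$ in probability.

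For comparison, the paper avoids complex arguments, moments and localization altogether. It writes the density \eqref{density-transition-dunkl-drift-gen-mod2} of $(X_t^\lambda-t\lambda)/\sqrt t$ and applies the real asymptotic of Proposition~\ref{limit-dunkl-kernel} to $E_k(\lambda,t\lambda+\sqrt t\,y)$, locally uniformly in $y$, together with $E_k(x,\lambda+y/\sqrt t)/E_k(x,\lambda)\to1$. It then concludes from locally uniform convergence of the densities to the Gaussian density, since vague convergence to a probability measure is weak convergence. This automatically excludes mass escaping to the other chambers.
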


This CLT can be extended to starting points depending on $t$ under the following  restriction:

\begin{definition}\label{def-regular-pairs}
  Pairs $(x,\lambda)\in \mathbb R^N\times \mathbb R^N$ of points are called regular, if $\lambda$ and $x$ are regular and
   in the interior of the same Weyl chamber
  $C$, and if 
  \begin{equation}\label{est-def-regular-pairs}
    \|x-\lambda\|< C(x,\lambda):=\text{max}(dist(x,\partial C), dist(\lambda,\partial C)\}
    \end{equation}
where $dist$ is the usual Euclidean distance of some point and some set in  $\mathbb R^N$.
  \end{definition}

With this notation we have the following  CLT.

 \begin{theorem}\label{clt-dunkl-general}
   Let  $(X_t^{\lambda, x})_{t\ge0}$ be a Dunkl process
with $k\ge0$, 
drift  vector $\lambda$,  and start in $x\in\mathbb R^N$. If  $(\lambda,x)\in\mathbb R^N\times\mathbb R^N$
is regular, 
then the random variables
     $(X_t^{\lambda, tx}-t(\lambda+x))/\sqrt t$ tend in distribution
to  $N(0,I_N)$ for $t\to\infty$.
 \end{theorem}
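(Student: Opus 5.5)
Since all processes here are determined by their transition densities, I will prove the statement by showing pointwise convergence of characteristic functions (equivalently, Fourier transforms of the laws). For $u\in\mathbb R^N$ fix $t>0$. By \eqref{density-transition-dunkl-drift-gen} with start $tx$, the law of $X_t^{\lambda,tx}$ has density
$$ \frac{c_k e^{-\|\lambda\|^2t/2}}{t^{\gamma+N/2}}\, e^{-(t^2\|x\|^2+\|y\|^2)/(2t)}\,\frac{E_k(\sqrt t x, y/\sqrt t)E_k(y,\lambda)}{E_k(tx,\lambda)}\,w_k(y). $$
Substituting $y=t z$ and using the homogeneity of $w_k$ of degree $2\gamma$ and property (2), the exponent becomes $-t(\|x\|^2+\|z\|^2)/2$ and the kernels become $E_k(tx,z)E_k(tz,\lambda)/E_k(tx,\lambda)$. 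The idea is that for large $t$ the mass concentrates near $z=x+\lambda$, and that, because $(x,\lambda)$ is regular and $z$ is near $x+\lambda$ (hence in the same open chamber), each Dunkl kernel $E_k(tv,w)$ with $v,w$ in the interior of $C$ behaves like $e^{t\langle v,w\rangle}$ times a slowly varying factor. Precisely, I will use the asymptotic from \cite{R3}: for $v,w$ in the open chamber, $E_k(tv,w)\sim c\, e^{t\langle v,w\rangle}/(t^{\gamma}\sqrt{w_k(v)w_k(w)})$ locally uniformly, together with the global bound (3), $E_k(v,w)\le \max_g e^{\langle v,gw\rangle}$.

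The key steps are as follows. First, establish a limit lemma: for compact sets of $(v,w)$ in the interior of $C\times C$, $t^\gamma E_k(tv,w)e^{-t\langle v,w\rangle}\to c'/\sqrt{w_k(v)w_k(w)}$ uniformly (this is the ``limit result for Dunkl kernels'' announced in the introduction, deduced from the estimates of \cite{R3}). Second, rescale around the center by writing $y=t(x+\lambda)+\sqrt t\, s$. On $\{\|s\|\le M\}$, $z=x+\lambda+s/\sqrt t$ lies in a compact subset of the interior of $C$ for $t$ large, so the lemma applies to all three kernels. Multiplying the exponentials gives
$$-\tfrac t2(\|x\|^2+\|z\|^2+\|\lambda\|^2)+t\langle x,z\rangle+t\langle z,\lambda\rangle-t\langle x,\lambda\rangle=-\tfrac t2\|z-x-\lambda\|^2=-\|s\|^2/2 .$$
The polynomial prefactors $t^{-\gamma-N/2}$, $w_k(y)$, and the $t^{\mp\gamma}$ and $w_k^{-1/2}$ factors from the lemma combine, with the Jacobian $t^{N/2}$, into a constant times $(1+o(1))$; the constant must be $(2\pi)^{-N/2}$ because the limit is a probability density. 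Third, control the complement $\{\|s\|>M\}$ uniformly in $t$ so that the family of laws is tight.

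The third step is the main obstacle, and it is exactly where regularity of the pair $(x,\lambda)$ enters. Here I will bound the numerator using (3): $E_k(tx,z)E_k(tz,\lambda)\le \max_{g,h}e^{t\langle x,gz\rangle+t\langle z,h\lambda\rangle}$, while the denominator is bounded below via the lemma by $c\,t^{-\gamma}e^{t\langle x,\lambda\rangle}$. The resulting exponent is $-\frac t2\|z-g^{-1}x-h^{-1}\lambda\|^2+t(\langle g^{-1}x,h^{-1}\lambda\rangle-\langle x,\lambda\rangle)$. For $g\ne h$, the last term is strictly negative because $x$ and $\lambda$ are regular in the same chamber. For $g=h$, the terms give Gaussian tails around $g^{-1}(x+\lambda)$. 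I expect the condition $\|x-\lambda\|<C(x,\lambda)$ to be needed to turn these into a uniform exponential-in-$t$ bound that also absorbs the polynomial factors $t^{\gamma}$ and $w_k$, including near chamber walls where the lemma fails. I will also compare $\|z-g^{-1}(x+\lambda)\|$ with the distance of $x+\lambda$ to the walls, in order to show that the non-identity centers contribute $O(e^{-ct})$. Combining these estimates with dominated convergence on compacts yields convergence of $(X_t^{\lambda,tx}-t(\lambda+x))/\sqrt t$ to $N(0,I_N)$.
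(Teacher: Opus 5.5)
Your Steps 1--2 are the paper's computation: three kernel asymptotics, exponents collapsing to $-\|s\|^2/2$, and powers of $t$ and $w_k$-factors cancelling. The way you finish, however, does not work.

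You leave the constant in the kernel asymptotics open and fix it by saying the limit must be a probability density. By Fatou this only gives an inequality, so it needs tightness, i.e.\ your Step 3, and the bound proposed there cannot deliver it. Using (3) in the numerator loses a factor of order $t^{2\gamma}$ against the true size. Worse, at the mirror points $z=g^{-1}(x+\lambda)$, $g\neq e$, the choice $g=h$ leaves the exponent $-\frac t2\|z-g^{-1}(x+\lambda)\|^2$ with no negative remainder. So it only bounds the mass near each mirror point by $O(t^{2\gamma})$. The hoped-for $O(e^{-ct})$ is in fact false. For $N=1$, $k>0$, $x,\lambda>0$, \eqref{limit-1f1} shows that the density of $X_t^{\lambda,tx}/t$ near $z=-(x+\lambda)$ has size $t^{-3/2}e^{-t(|z|-x-\lambda)^2/2}$, i.e.\ mass of order $t^{-2}$, only polynomially small.

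No tail control is needed, though. Proposition~\ref{limit-dunkl-kernel} gives the constant explicitly as $((2\pi)^{N/2}c_k)^{-1}$, coming from the comparison with the Euclidean kernel $\tilde\Gamma_0$. So $c_k$ times the three constants is exactly $(2\pi)^{-N/2}$. Locally uniform convergence of probability densities to a probability density then already gives weak convergence (Scheff\'e, or vague convergence without loss of mass), which is how the paper concludes.

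Second, your Step 1 lemma is stated for arbitrary compacta in the open chamber and attributed to the estimates of \cite{R3}. Those estimates give it only in a restricted situation, and this, not the tails, is where the hypothesis $\|x-\lambda\|<C(x,\lambda)$ enters. Lemma~\ref{roesler-estimate} bounds the differences between $\tilde\Gamma_k$ (times $e^{-tM_i}$) and $\tilde\Gamma_0$ by powers of $t$ times $e^{-C_\delta(v,w)^2/2t}$. Meanwhile $\tilde\Gamma_0$ itself equals $c_0t^{-N/2}e^{-\|v-w\|^2/2t}$. Hence $\tilde\Gamma_k/\tilde\Gamma_0\to1$ as $t\downarrow0$ (equivalently, the asymptotics of $E_k(tv,w)$ as $t\to\infty$) only when $\|v-w\|<C_\delta(v,w)$. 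That means pairs that are regular after a rescaling $(\tau_1v,\tau_2w)$; beyond that the paper only conjectures the theorem.

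So you must verify that all three pairs occurring are of this type. The pair $(x,\lambda)$ is regular by hypothesis. The pairs $(x+\lambda,x)$ and $(x+\lambda,\lambda)$ qualify by Lemma~\ref{mean-prop}, which shows that $((x+\lambda)/2,x)$ and $((x+\lambda)/2,\lambda)$ are regular. Uniformity for $z=x+\lambda+s/\sqrt t$ with $\|s\|\le M$ is handled by the perturbation $\phi(t)=t^{-1/2}$ in Proposition~\ref{limit-dunkl-kernel}. With these two corrections (the restricted kernel lemma with explicit constant, and Scheff\'e in place of Step 3) your argument becomes the paper's proof.
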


 Clearly, for $x=0$,  the assertions of the CLTs \ref{clt-dunkl} and \ref{clt-dunkl-general}  are equal where this case formally is not covered
 by  Theorem \ref{clt-dunkl-general}. For this reason we conjecture that
 Theorem \ref{clt-dunkl-general} still holds under slightly  more general assumptions.
On the other hand,
we  show in the end of Section 2  that for $N=1$  the assertion of Theorem \ref{clt-dunkl-general} does not hold for $k>0$ and all
 $\lambda,x$ in different Weyl chambers.

 We shall prove Theorems \ref{clt-dunkl} and \ref{clt-dunkl-general}
 with the following limit result for the Dunkl kernels which is a
 variant of Theorem 2 in \cite{RdJ}, and which generalizes Corollary 3.4 in \cite{R3} slightly:

 \begin{proposition}\label{limit-dunkl-kernel} Let  $(\lambda,x)\in\mathbb R^N\times\mathbb R^N$ such that
there are $\tau_1,\tau_2>0$ such that  $(\tau_1\lambda,\tau_2 x)$ is regular.
 Let $\phi:[0,\infty[\to[0,\infty[$ be a decreasing function with $\lim_{t\to\infty} \phi(t)=0$. Then,
           locally uniformly for $y\in\mathbb R^N$,
 \begin{equation}\label{dunkl-kernel-limit}
  \lim_{t\to\infty} E_k(t\lambda +t\> \phi(t)y, x)
  \cdot (2\pi)^{N/2} c_k e^{\langle-t \lambda - t\> \phi(t)y, x\rangle} \sqrt{w_k(t\lambda+ t\> \phi(t)y)w_k(x)}=1.
 \end{equation}
\end{proposition}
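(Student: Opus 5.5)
We derive the proposition from the asymptotics of the Dunkl kernel in \cite{R3}. Let $z_t:=t\lambda+t\phi(t)y$. By property (2) of the Dunkl kernel, $E_k(z_t,x)=E_k(\sqrt{t}\,\tilde z_t,\sqrt t\, x)$ with $\tilde z_t:=\lambda+\phi(t)y\to\lambda$. Both weights $w_k$ are homogeneous of degree $2\gamma$, so
$$\sqrt{w_k(z_t)w_k(x)}=t^{\gamma}\sqrt{w_k(\tilde z_t)w_k(x)}.$$
Hence \eqref{dunkl-kernel-limit} is equivalent to
$$\lim_{t\to\infty}(2\pi)^{N/2}c_k\, t^{\gamma}\, e^{-t\langle \tilde z_t,x\rangle}E_k(\sqrt t\tilde z_t,\sqrt t x)\sqrt{w_k(\tilde z_t)w_k(x)}=1.$$
This is the form of Theorem 2 in \cite{RdJ} and Corollary 3.4 in \cite{R3}, with the pair $(\lambda,x)$ replaced by the moving pair $(\tilde z_t,x)$. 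So the task is to show that these results hold uniformly when the first argument ranges over a small compact neighbourhood of $\lambda$.

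First reduce the hypothesis. Property (2) lets us rescale: replacing $(\lambda,x)$ by $(\tau_1\lambda,\tau_2x)$ replaces $t$ by $t\tau_1\tau_2$ in the product $\langle\sqrt t\tilde z_t,\sqrt t x\rangle$, and the homogeneity of $w_k$ absorbs the resulting constants. We may therefore assume $(\lambda,x)$ is regular. Regularity is an open condition, since both sides of \eqref{est-def-regular-pairs} depend continuously on the points. Hence there is a compact ball $B$ around $\lambda$, contained in the interior of the same chamber $C$, such that $(\mu,x)$ is regular for all $\mu\in B$. For $y$ in a compact set, $\tilde z_t\in B$ for all large $t$, because $\phi(t)\to0$.

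The core step is the uniform version of Rösler's estimate. In \cite{R3} one writes $E_k(\sqrt t\mu,\sqrt t x)e^{-t\langle\mu,x\rangle}$ through the heat kernel and its semigroup representation. Equivalently, one uses the positive integral representation $E_k(a,b)=\int e^{\langle \xi,b\rangle}d\mu_a^k(\xi)$, where $\mu_a^k$ is supported in the convex hull of $Wa$. The asymptotics come from a Laplace-type concentration near the extremal point, together with a Gaussian comparison. The regularity condition $\|x-\mu\|<C(x,\mu)$ is exactly what guarantees that the comparison Gaussian centred at $x$ or $\mu$ keeps the relevant mass inside the chamber up to exponentially small errors. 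I would go through that argument keeping track of the constants. Every error term is bounded by a quantity of the form $c\,e^{-\delta t}$ or $c/\sqrt t$, where $c$ and $\delta$ depend continuously on $\mu$ through $dist(\mu,\partial C)$, $\|\mu\|$ and $C(x,\mu)-\|x-\mu\|$. On the compact set $B$ these quantities are bounded away from the degenerate values. This gives
$$(2\pi)^{N/2}c_k\, t^{\gamma}\, e^{-t\langle \mu,x\rangle}E_k(\sqrt t\mu,\sqrt t x)\sqrt{w_k(\mu)w_k(x)}\to1$$
uniformly in $\mu\in B$. Substituting $\mu=\tilde z_t$ then yields the claim, locally uniformly in $y$.

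The main obstacle is this uniformity: we must check that the proof of Corollary 3.4 in \cite{R3} has constants that are uniform for first arguments in a compact subset of the set of regular partners of $x$. If that bookkeeping becomes unwieldy, there is an alternative route. One proves the pointwise limit for each fixed regular pair. One then obtains equicontinuity in $\mu$ of $t^{\gamma}e^{-t\langle\mu,x\rangle}E_k(\sqrt t\mu,\sqrt tx)$ for large $t$. For this, use the Dunkl operators: $\nabla_\mu$ of $E_k(\sqrt t\mu,\sqrt t x)e^{-t\langle\mu,x\rangle}$ is expressed through $T_\xi$. The difference terms are $E_k(\sqrt t\sigma_\alpha\mu,\cdot)$, which by property (3) are exponentially smaller than $e^{t\langle\mu,x\rangle}$ because $\mu$ and $x$ lie in the same open chamber. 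The derivative of the normalized kernel is therefore $O(1)$ times the kernel plus exponentially small terms. Gronwall's inequality along segments in $B$ then gives the uniformity. A Dini-type argument combining this equicontinuity with the pointwise limit completes the proof.
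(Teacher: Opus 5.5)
Your main route is the paper's. After rescaling by property (2) and the homogeneity of $w_k$, the claim is exactly $\tilde\Gamma_k(1/t,x,\lambda+\phi(t)y)/\tilde\Gamma_0(1/t,x,\lambda+\phi(t)y)\to1$. The uniformity you flag as the main obstacle needs no bookkeeping: Lemma 4.4 of \cite{R3} (Lemma \ref{roesler-estimate}) already holds for all points of $C_\delta$ with constants depending only on $\delta$. The paper therefore just chooses $\delta$ so that $x,\lambda+\phi(t)y\in C_\delta$ and $\|x-\lambda\|<C_\delta(x,\lambda)$, and divides both estimates by $\tilde\Gamma_0$.

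Drop your fallback, though. Property (3) is $W$-symmetric and only gives $E_k(\sigma_\alpha\mu,tx)\le e^{t\langle\mu,x\rangle}$. The reflected terms are in fact only polynomially smaller: for $N=1$, \eqref{limit-1f1} gives $E_k(-s,\lambda)/E_k(s,\lambda)\sim k/(2s\lambda)$. So once you multiply by $t^\gamma$, your Gronwall step does not close without an a priori bound on $E_k(\sigma_\alpha\mu,tx)$ that you do not have.
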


We shall also prove the following CLT  for Bessel processes with drift:

\begin{theorem}\label{clt-bessel}
  For any Bessel process  $(X_t^\lambda)_{t\ge0}$ on some Weyl chamber $C$ with $k\ge0$, 
regular    drift  vector $\lambda\in C$,  and start in any $x\in C$,
 the random variables    $(X_t^\lambda-t\lambda)/\sqrt t$ tend in distribution
to the standard normal distribution $N(0,I_N)$ for $t\to\infty$.
\end{theorem}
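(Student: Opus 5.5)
A natural route is to reduce Theorem \ref{clt-bessel} to the Dunkl case. The Bessel process on $C$ with drift $\lambda$ is the $W$-orbit projection of a Dunkl process with the same drift, so it suffices to couple the two.

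\textbf{Projection step.} Let $\pi:\mathbb R^N\to C$ map $y$ to the unique point of $C$ in its orbit $Wy$. We claim that if $(Y_t)$ is a Dunkl process with drift $\lambda\in C$ started at $x\in C$, then $(\pi(Y_t))$ is a Bessel process with drift $\lambda$ started at $x$. At the level of one-dimensional marginals this is a direct computation with the density \eqref{density-transition-dunkl-drift-gen}. For a Borel set $A\subset C$ we have
$$P(\pi(Y_t)\in A)=\sum_{g\in W}P(Y_t\in gA).$$
Substituting $y=gz$ and using $W$-invariance of $w_k$ and of $\|y\|$ gives an integrand
$$\sum_g E_k(x/\sqrt t,gz/\sqrt t)\,E_k(gz,\lambda).$$
This is not literally $|W|\,J_k\cdot J_k$, so it must be compared with \eqref{density-transition-bessel-drift-gen} rather than matched term by term.

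It is cleaner to avoid the full projection statement. We only need convergence in distribution, and the Bessel marginal is directly computable, so I would work with the Bessel density \eqref{density-transition-bessel-drift-gen} itself. Write $X_t$ for the Bessel process and compute the law of $Z_t:=(X_t-t\lambda)/\sqrt t$. Substituting $y=t\lambda+\sqrt t\,z$, the density of $Z_t$ at $z$ is
$$\frac{|W|c_k e^{-\|\lambda\|^2t/2}}{t^{\gamma}}\; e^{-(\|x\|^2+\|t\lambda+\sqrt t z\|^2)/(2t)}\;\frac{J_k(x/\sqrt t,\,\sqrt t\lambda+z)\,J_k(t\lambda+\sqrt t z,\lambda)}{J_k(x,\lambda)}\;w_k(t\lambda+\sqrt t z)\,\mathbf 1_C(t\lambda+\sqrt t z).$$
For fixed $z$, three limits apply.

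First, $J_k(x/\sqrt t,\sqrt t\lambda+z)\to J_k(x,\lambda)$. Indeed, property (2) gives $E_k(x/\sqrt t,\sqrt t\lambda+z)=E_k(x,\lambda+z/\sqrt t)$, and $E_k$ is continuous.

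Second, $J_k(t\lambda+\sqrt t z,\lambda)$ is handled by Proposition \ref{limit-dunkl-kernel}, taking $\phi(t)=t^{-1/2}$, $x:=\lambda$, and using $\tau_1=\tau_2=1$, so that $(\lambda,\lambda)$ is regular. This gives
$$E_k(t\lambda+\sqrt t z,\lambda)\sim \frac{e^{\langle t\lambda+\sqrt t z,\lambda\rangle}}{(2\pi)^{N/2}c_k\sqrt{w_k(t\lambda+\sqrt t z)w_k(\lambda)}}.$$
The terms $E_k(t\lambda+\sqrt t z,g\lambda)$ with $g\ne e$ are negligible. By property (3) they are bounded by $\max_h e^{\langle t\lambda+\sqrt t z,hg\lambda\rangle}$, and since $\lambda$ is regular, $\langle\lambda,h\lambda\rangle\le\|\lambda\|^2-\delta$ for $h\neq e$. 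So these terms are $O(e^{t\|\lambda\|^2-\delta t+O(\sqrt t)})$, which is exponentially smaller than the main term; the main term's polynomial factor in $w_k$ is harmless. Hence $J_k\sim E_k/|W|$.

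Third, $\mathbf 1_C(t\lambda+\sqrt t z)\to1$, because $\lambda$ lies in the interior of $C$.

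Plugging these in, $w_k(t\lambda+\sqrt t z)/\sqrt{w_k(t\lambda+\sqrt tz)}=\sqrt{w_k(t\lambda+\sqrt t z)}$. By homogeneity this equals $t^{\gamma}\sqrt{w_k(\lambda+z/\sqrt t)}$, which is asymptotically $t^\gamma\sqrt{w_k(\lambda)}$. The power $t^\gamma$ cancels, and the exponents combine to
$$-\tfrac{\|\lambda\|^2t}{2}-\tfrac{\|t\lambda+\sqrt tz\|^2}{2t}+\langle t\lambda+\sqrt tz,\lambda\rangle+o(1)=-\|z\|^2/2+o(1).$$
The density of $Z_t$ therefore converges pointwise, locally uniformly, to $(2\pi)^{-N/2}e^{-\|z\|^2/2}$. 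Scheffé's lemma (or Fatou together with the fact that both sides are probability densities) then yields convergence in distribution to $N(0,I_N)$.

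\textbf{Main obstacle.} The delicate point is justifying the step from local-uniform pointwise convergence of the densities to weak convergence. Scheffé only requires pointwise convergence plus the limit being a probability density, so this works provided the Proposition's local uniformity is used for each fixed $z$. Beyond that, the only real care needed is in the regularity estimate that kills the non-identity terms $g\ne e$ in the symmetrization $J_k(\cdot,\lambda)$.
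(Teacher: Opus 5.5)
There is a genuine gap in the step where you discard the terms $E_k(t\lambda+\sqrt t z,g\lambda)$, $g\neq e$, of $|W|J_k(t\lambda+\sqrt t z,\lambda)$.

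Property (3) cannot give what you claim. The bound $\max_{h\in W}e^{\langle u,hg\lambda\rangle}$ is the same for every $g$, because $hg$ runs through all of $W$ as $h$ does (take $h=g^{-1}$). For $u=t\lambda+\sqrt t z$ and large $t$ this bound equals $e^{t\|\lambda\|^2+\sqrt t\langle z,\lambda\rangle}$. By Proposition~\ref{limit-dunkl-kernel}, that is of the same exponential order as the main term, and even larger by a factor of order $t^{\gamma}$.

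Moreover, the exponential smallness you assert is false in general, already for $N=1$ and $k>0$. There \eqref{limit-1f1} gives $E_k(s,-\lambda)=E_k(-s,\lambda)\sim c\,e^{s\lambda}(s\lambda)^{-(k+1)}$ as $s\to\infty$. This has the same exponential rate as $E_k(s,\lambda)\sim c'\,e^{s\lambda}(s\lambda)^{-k}$, so the cross-chamber terms are only polynomially smaller. Nothing in the paper controls $E_k(u,g\lambda)$ pointwise when $u$ and $g\lambda$ lie in different chambers: Proposition~\ref{limit-dunkl-kernel} and Lemma~\ref{roesler-estimate} both require the two arguments in the same chamber. So your claim $J_k\sim E_k/|W|$ is unjustified, and with it the pointwise convergence of the full density that you feed into Scheff\'e.

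The missing idea is that you need only the positivity of these terms, not pointwise control of them. Let $f_t$ be the density of $Z_t$, and let $f_t^{(e)}\le f_t$ be the part coming from $g=e$ alone. Your computation, including your first and third limits, does show $f_t^{(e)}(z)\to\varphi(z):=(2\pi)^{-N/2}e^{-\|z\|^2/2}$ for every $z$. Since $\int f_t\,dz=\int\varphi\,dz=1$,
\[
\int|f_t-\varphi|\,dz=2\int(\varphi-f_t)^+\,dz\le 2\int(\varphi-f_t^{(e)})^+\,dz\longrightarrow 0
\]
by dominated convergence, as $0\le(\varphi-f_t^{(e)})^+\le\varphi$. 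This is the Fatou-type variant you mention in passing, but it has to be applied to the lower bound $f_t^{(e)}$, not to $f_t$.

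This is exactly the paper's mechanism. The paper writes the law of the Bessel process started at $0$ as $\tilde\mu_t|_{C_t}+\hat\mu_t$. Here $\tilde\mu_t$ is the Dunkl law and $\hat\mu_t\ge 0$ collects the $g\neq e$ terms. Lemma~\ref{weak-convergence-lemma}, together with Theorem~\ref{clt-dunkl}, forces the mass of $\hat\mu_t$ to vanish. The factor $J_k(x,\lambda+y/\sqrt t)/J_k(x,\lambda)\to 1$ is then handled separately.

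With this repair, your argument is a slightly more direct version of the paper's. You get $L^1$-convergence of the densities in one step from Proposition~\ref{limit-dunkl-kernel}, with the starting point $x$ included, without passing through Theorem~\ref{clt-dunkl}.
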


Theorems \ref{clt-dunkl} and \ref{clt-bessel} clearly lead to the following law of large numbers::

\begin{corollary}\label{cor-wslln} Let $(X_t^\lambda)_{t\ge0}$ be a Dunkl or Bessel Process with 
  regular drift $\lambda$ and arbitrary
  starting point $x$ (with we assume $x,\lambda\in C$ for a Bessel process on the chamber $C$).
  Then $X_t^\lambda/t\to \lambda$ for $t\to\infty$ in probability.
\end{corollary}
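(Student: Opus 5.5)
The corollary follows from Theorems \ref{clt-dunkl} and \ref{clt-bessel} by a Slutsky-type argument. Fix the process $(X_t^\lambda)_{t\ge0}$, either Dunkl or Bessel, with regular drift $\lambda$ and starting point $x$, where in the Bessel case $x,\lambda\in C$. By the relevant theorem, $Z_t:=(X_t^\lambda-t\lambda)/\sqrt t$ converges in distribution to $N(0,I_N)$. We write
$$\frac{X_t^\lambda}{t}-\lambda=\frac{1}{\sqrt t}\, Z_t .$$
The right-hand side is a deterministic sequence $1/\sqrt t\to0$ multiplied by a family that converges in distribution. By Slutsky's lemma it therefore converges in distribution to $0\cdot N(0,I_N)=\delta_0$. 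Convergence in distribution to a constant is equivalent to convergence in probability, so $X_t^\lambda/t\to\lambda$ in probability.

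To keep the argument self-contained, one can instead show directly that $Z_t$ is tight. Fix $\varepsilon,\eta>0$. The weak convergence $Z_t\to N(0,I_N)$ gives $M>0$ and $t_0$ such that $P(\|Z_t\|>M)<\eta$ for all $t\ge t_0$; this uses the portmanteau theorem on the closed set $\{\|z\|\ge M\}$ with $M$ chosen so that $N(0,I_N)(\{\|z\|\ge M\})<\eta/2$. Then for $t\ge \max(t_0, M^2/\varepsilon^2)$ we get
$$P\bigl(\|X_t^\lambda/t-\lambda\|>\varepsilon\bigr)=P(\|Z_t\|>\varepsilon\sqrt t)\le P(\|Z_t\|>M)<\eta .$$

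There is no genuine obstacle. The one point worth checking is that the hypotheses of the corollary match those of Theorems \ref{clt-dunkl} and \ref{clt-bessel}. Those theorems allow an arbitrary fixed starting point, with $x\in C$ in the Bessel case, and require a regular drift, which is exactly what the corollary assumes. So nothing beyond the two CLTs is needed.
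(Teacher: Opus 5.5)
Your proposal is correct and follows the paper's route: the paper gives no separate proof and only says the corollary follows \emph{clearly} from Theorems \ref{clt-dunkl} and \ref{clt-bessel}. Your Slutsky argument, or equivalently your tightness bound $P(\|Z_t\|>\varepsilon\sqrt t)\le P(\|Z_t\|>M)$, is exactly the standard step the paper leaves implicit, and your check that the hypotheses match those of the two CLTs is accurate.
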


This result should be compared with the strong law of large numbers $m_1(X_t^\lambda)/t\to \lambda$
for $t\to\infty$ a.s for regular $\lambda$
in
\cite{V} with certain functions $m_1:\mathbb R^N\to\mathbb R^N$ depending on $\lambda$ where for  $y\in\mathbb R^N$ with $\|y\|$
large,
$m_1(y)$ is approximately equal to the projection of $y$ to the Weyl chamber $C$ which contains $\lambda$.
Unfortunately we are are not able to combine these strong laws from \cite{V} with Corollary \ref{cor-wslln} to conclude that
$X_t^\lambda/t\to \lambda$  a.s.~for $t\to\infty$ under the conditions of Corollary \ref{cor-wslln}.
However, we conjecture thta this strong law holds.

We finally notice that in the CLT  \ref{clt-bessel} the regularity of the drift $\lambda$ is necessary for $k>0$,
as for drift vectors $\lambda$ on the boundary of $C$
   the supports of the distributions of $X_t^\lambda-t\lambda)/\sqrt t$
   are contained in some half space, i.e., the limits cannot be normal. We conjecture that here the limit distributions
   are multivariate $\chi^2$-distributions (after taking squares in all coordinates).

Please notice that for $k=0$, Dunkl processes are just Brownian motions with drift $\lambda$, i.e., 
here the CLT \ref{clt-dunkl-general} holds for all $\lambda,x\in\mathbb R^N$ in the Dunkl case,
and the limit distributions for the associated Bessesl processes are certain projections of normal distributions. 

We prove the CLTs \ref{clt-dunkl}, \ref{clt-dunkl-general}, \ref{clt-bessel} as well as  Proposition \ref{limit-dunkl-kernel} 
in the following section.

\section{Proofs of the main results}

We first prove Proposition \ref{limit-dunkl-kernel}. We  fix some root system on $\mathbb R^N$ and some multiplicity $k\ge0$.
We consider the renormalized Dunkl heat kernels
$$\tilde\Gamma_k(t,x,y):=\frac{c_k\sqrt{w_k(x)\cdot w_k(y)}}{t^{\gamma+N/2} }
 e^{-(\|x\|^2+\|y\|^2)/(2t)} E_k(\frac{x}{\sqrt{t}}, \frac{y}{\sqrt{t}})  \quad\quad(t>0, \>  x,y\in\mathbb R^N).$$
 We also need the following notations from \cite{R3}. Let $C\subset \mathbb R^N$ be some Weyl chamber.
 For $\delta>0$ we then define
 $$C_\delta:= \{x\in C:\> \|x\|<1/\delta; \> dist(x,\partial C)>\delta\} \quad \text{and}$$
 $$C_\delta(x,\lambda):=\text{max}(dist(x,\partial C_\delta), dist(\lambda,\partial C_\delta)\} \quad\quad(x,\lambda\in C)$$
similar to Definition \ref{def-regular-pairs}.
We then have $C_{\tilde \delta}\subset C_\delta\subset C$ for $\tilde\delta>\delta$. Moreover, for regular $(x,\lambda)$, 
$$C_\delta(x,\lambda)\le C(x,\lambda) \quad\text{with} \quad C_\delta(x,\lambda)\to C(x,\lambda) \quad\text{for}  \quad\delta\to0.$$
Furthermore, the following estimate is shown in Lemma 4.4 of  \cite{R3}:

\begin{lemma}\label{roesler-estimate}
  Let $\delta>0$. Then there exist constants $M_1,M_2>0$ such that for all $x,y\in C_\delta$ and $t>0$,
   $$\tilde\Gamma_0(t,x,y)-e^{-tM_1}\tilde\Gamma_k(t,x,y)\le
  \frac{c_0\cdot 2^{N/2}}{t^{N/2}} e^{-C_\delta(x,y)^2/2t}, $$
  and
 $$e^{-tM_2}\tilde\Gamma_k(t,x,y)-\tilde\Gamma_0(t,x,y)\le \frac{c_k\cdot 2^{N/2}}{t^{\gamma+N/2} \delta^{2\gamma}}
       e^{-C_\delta(x,y)^2/2t}.$$
\end{lemma}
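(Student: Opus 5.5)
The plan is to derive both inequalities from a parabolic maximum principle on the bounded cylinder $]0,T]\times C_\delta$. The two functions $\tilde\Gamma_0(t,\cdot,y)$ and $e^{-tM}\tilde\Gamma_k(t,\cdot,y)$ will be shown to be sub- or supersolutions of the ordinary heat equation there. They have the same singular initial datum at $y$, and the boundary terms on $\partial C_\delta$ will produce the Gaussian factor $e^{-C_\delta(x,y)^2/2t}$.

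\emph{Step 1: conjugation.} Fix $y\in C_\delta$ and put $u(t,x):=\tilde\Gamma_k(t,x,y)$. The unnormalized kernel $\Gamma_k(t,x,y)=\tilde\Gamma_k(t,x,y)/\sqrt{w_k(x)w_k(y)}$ solves $\partial_t\Gamma_k=\frac12\Delta_k^x\Gamma_k$ by (\ref{density-transition-dunkl-gen}). I would compute $\sqrt{w_k}\,\Delta_k\bigl(w_k^{-1/2}f\bigr)$ on the interior of $C$, using (\ref{Dunkl-laplace}). The first-order terms cancel by the choice of $\sqrt{w_k}$, and the standard root-system identity $\sum_{\alpha\neq\beta}k(\alpha)k(\beta)\langle\alpha,\beta\rangle/(\langle\alpha,x\rangle\langle\beta,x\rangle)=0$ kills the mixed terms. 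This leaves
$$\sqrt{w_k}\,\Delta_k(w_k^{-1/2}f)=\Delta f-V f+\sum_{\alpha\in R_+}\frac{k(\alpha)\|\alpha\|^2}{\langle\alpha,x\rangle^2}\,\sqrt{\tfrac{w_k(x)}{w_k(\sigma_\alpha x)}}\,f(\sigma_\alpha x),$$
where $V(x)=\sum_\alpha k(\alpha)^2\|\alpha\|^2/\langle\alpha,x\rangle^2$ up to the exact constants. The only features I need are these:
\begin{itemize}
\item[(a)] $V$ is bounded on $C_\delta$, since there $\langle\alpha,x\rangle\ge \delta\|\alpha\|$;
\item[(b)] the reflection term is nonnegative, because $E_k>0$ by property (3), and it is bounded by a constant times $u$ itself, since for $x,y\in C$ one compares $E_k(\sigma_\alpha x,y)$ with $E_k(x,y)$ via (3) and (4).
\end{itemize}
Hence on $]0,\infty[\times C_\delta$ I expect two constants $M_1,M_2$ with
$$\Bigl(\partial_t-\tfrac12\Delta\Bigr)\bigl(e^{-tM_1}u\bigr)\le 0\le\Bigl(\partial_t-\tfrac12\Delta\Bigr)\bigl(e^{-tM_2}u\bigr).$$
That is, $e^{-tM_1}u$ is a heat subsolution and $e^{-tM_2}u$ a supersolution. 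Obtaining a clean sign or bound on the reflection term is the main obstacle. If a direct pointwise bound fails, I would first try a Duhamel argument that treats it as a nonnegative source.

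\emph{Step 2: initial and boundary data.} Both $\tilde\Gamma_0(t,\cdot,y)$ and $\tilde\Gamma_k(t,\cdot,y)$ converge to $\delta_y$ as $t\to0$ with respect to Lebesgue measure on $C_\delta$. For $\tilde\Gamma_k$ this follows because $\sqrt{w_k(x)w_k(y)}$ tends to $w_k(y)$ near the diagonal and $\Gamma_k$ is an approximate identity for $w_k\,dy$. Thus the differences $D_1:=\tilde\Gamma_0-e^{-tM_1}\tilde\Gamma_k$ and $D_2:=e^{-tM_2}\tilde\Gamma_k-\tilde\Gamma_0$ have zero initial values and are heat super- resp.\ subsolutions. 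On $\partial C_\delta$ I only need crude upper bounds: $\tilde\Gamma_0\le c_0t^{-N/2}e^{-\|z-y\|^2/2t}$, and, by property (3) with $\max_g e^{\langle z,gy\rangle}=e^{\langle z,y\rangle}$ for $z,y\in C$, $\tilde\Gamma_k\le c_k t^{-\gamma-N/2}\sqrt{w_k(z)w_k(y)}\,e^{-\|z-y\|^2/2t}$. On the bounded set $C_\delta$ the weight factor is controlled by a constant of the type $\delta^{-2\gamma}$, after rescaling.

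\emph{Step 3: maximum principle and symmetrization.} The parabolic maximum principle gives that $D_1$, resp.\ $D_2$, is bounded by the heat extension of its boundary data. I would estimate this extension by $\sup_{z\in\partial C_\delta}$ of the boundary bound, using a Gaussian in $\|x-z\|$ and $\|z-y\|$ and absorbing the convolution by $2^{N/2}$ through the elementary bound $e^{-a^2/2t}e^{-b^2/2t}\le e^{-(a+b)^2/4t}$. Since $\|z-y\|\ge \mathrm{dist}(y,\partial C_\delta)$, this yields the factor $e^{-\mathrm{dist}(y,\partial C_\delta)^2/2t}$. Because $\tilde\Gamma_k(t,x,y)$ is symmetric in $x,y$ by property (1), I can run the same argument with the roles of $x$ and $y$ exchanged and take the better bound. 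This gives $\max$ of the two distances, i.e.\ $e^{-C_\delta(x,y)^2/2t}$, and hence both inequalities of Lemma \ref{roesler-estimate}.
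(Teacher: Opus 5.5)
The paper gives no proof of this lemma; it is quoted from Lemma 4.4 of \cite{R3}, so your proposal has to stand on its own. Your conjugation formula is correct. The sub/supersolution bookkeeping, however, is reversed, and this breaks the first inequality.

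Put $S:=\sum_{\alpha\in R_+}k(\alpha)\|\alpha\|^2\langle\alpha,x\rangle^{-2}u(t,\sigma_\alpha x)\ge0$. Then $(\partial_t-\frac12\Delta)(e^{-tM}u)=e^{-tM}\bigl(-(M+\frac12V)u+\frac12S\bigr)$. To bound $D_1$ and $D_2$ from above by the maximum principle, both must be \emph{sub}solutions. So you need $e^{-tM_1}u$ to be a supersolution and $e^{-tM_2}u$ a subsolution. With your assignments both $D_1$ and $D_2$ are supersolutions (your $D_2$ is not a subsolution), and the maximum principle then only gives lower bounds.

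Using only $S\ge0$, $e^{-tM_1}u$ is a supersolution once $M_1\le-\frac12\sup_{C_\delta}V$, i.e.\ with a growing factor $e^{t|M_1|}$. This cannot be improved to $M_1>0$: the first inequality as printed is false for $k>0$. For $N=1$, $k=1$ one has $E_1(x,y)=\frac{e^{xy}}{xy}-\frac{\sinh(xy)}{(xy)^2}$. Hence $\tilde\Gamma_1(t,x,x)=\tilde\Gamma_0(t,x,x)\bigl(1-\frac{t}{2x^2}(1-e^{-2x^2/t})\bigr)$. For $x=y\in C_\delta$ and any $M_1\ge0$, the left side is therefore at least $c_0t^{1/2}(1-e^{-2x^2/t})/(2x^2)$. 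The right side is $\sqrt2\,c_0t^{-1/2}e^{-C_\delta(x,x)^2/2t}$, which is far smaller as $t\downarrow0$; this is the killing term $-\frac12Vu$ at work. The provable version has $e^{+tM_1}$. That is all Lemma \ref{short-time} uses, since only $e^{\pm tM_i}\to1$ matters there.

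For the second inequality your route needs $e^{-tM_2}u$ to be a subsolution, i.e.\ $S\le(2M_2+V)u$ on $C_\delta$. Your argument for (b) does not give this. Properties (3) and (4) only yield $E_k(\sigma_\alpha x,y)\le e^{\langle x,y\rangle}$ for $x,y\in C$. But $E_k(x,y)$ is polynomially smaller than $e^{\langle x,y\rangle}$ (in rank one $E_k(x,y)\sim c\,e^{xy}(xy)^{-k}$, cf.\ the final Example), so nothing about $S/u$ follows. You need either a genuine comparison $E_k(\sigma_\alpha x,y)\le C\,E_k(x,y)$ on $C\times C$, or a way around it.

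One way around it: $F:=\sum_{g\in W}\tilde\Gamma_k(t,\cdot,gy)\ge u$ is $W$-invariant, so the reflection terms collapse and $(\partial_t-\frac12\Delta)F=-\frac12\sum_{\alpha\in R_+}k(\alpha)(k(\alpha)-1)\|\alpha\|^2\langle\alpha,x\rangle^{-2}F$. Then $e^{-tM_2}F$ is a subsolution on $C_\delta$ for suitable $M_2\ge0$, with weak initial value $\delta_y$. The cost is a factor $|W|$ in the boundary constant.

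Finally, in Step 3 the bound $e^{-a^2/2t}e^{-b^2/2t}\le e^{-(a+b)^2/4t}$ only produces $e^{-C_\delta^2/4t}$, not the stated $e^{-C_\delta^2/2t}$. Instead, apply the maximum principle directly, in a form that tolerates the singular initial datum (e.g.\ via the Dirichlet heat kernel of $C_\delta$):
$D_i(t,x)\le\sup_{0<s\le t,\,z\in\partial C_\delta}D_i(s,z)$.
Then symmetrize in $x,y$. The factor $2^{N/2}$ absorbs the fact that $s\mapsto s^{-N/2}e^{-d^2/2s}$ is not increasing on all of $]0,t]$. For $t\ge d^2/(N\ln 2)$ the trivial bound $D_1\le\tilde\Gamma_0\le c_0t^{-N/2}$ already suffices.
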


This result leads to the following short-time asymptotics for the  renormalized Dunkl heat kernels:

\begin{lemma}\label{short-time}
  Let $\phi:[0,\infty[\to[0,\infty[$ be a decreasing function with $\lim_{t\downarrow 0} \phi(t)=0$.
          Let $(\lambda,x)\in \mathbb R^N\times\mathbb R^N $ be regular. Then, locally uniformly in $y\in\mathbb R^N$,
      $$\lim_{t\downarrow 0} \frac{\tilde\Gamma_k(t,x, \lambda+\phi(t)y)}{\tilde\Gamma_0(t,x, \lambda+\phi(t)y)}=1.$$
\end{lemma}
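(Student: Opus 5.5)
The plan is to divide both estimates of Lemma \ref{roesler-estimate} by the explicit Gaussian kernel $\tilde\Gamma_0$. The regularity of $(\lambda,x)$ then produces an exponentially small error as $t\downarrow0$.

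For $k=0$ we have $E_0(z,w)=e^{\langle z,w\rangle}$, $w_0\equiv1$ and $\gamma=0$. Hence, writing $y_t:=\lambda+\phi(t)y$,
$$\tilde\Gamma_0(t,x,y_t)=\frac{c_0}{t^{N/2}}e^{-\|x-y_t\|^2/2t}.$$
Dividing the first inequality of Lemma \ref{roesler-estimate} by this expression gives
$$1-e^{-tM_1}\frac{\tilde\Gamma_k(t,x,y_t)}{\tilde\Gamma_0(t,x,y_t)}\le 2^{N/2}\exp\Bigl(-\frac{C_\delta(x,y_t)^2-\|x-y_t\|^2}{2t}\Bigr).$$
Dividing the second inequality in the same way gives
$$e^{-tM_2}\frac{\tilde\Gamma_k(t,x,y_t)}{\tilde\Gamma_0(t,x,y_t)}-1\le \frac{c_k2^{N/2}}{c_0\,\delta^{2\gamma}t^{\gamma}}\exp\Bigl(-\frac{C_\delta(x,y_t)^2-\|x-y_t\|^2}{2t}\Bigr).$$
Both right-hand sides apply only when $x,y_t\in C_\delta$, so $\delta$ must be chosen first.

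\textbf{Choice of $\delta$.} Since $(\lambda,x)$ is regular, $x$ and $\lambda$ lie in the interior of the same chamber $C$, and $\|x-\lambda\|<C(x,\lambda)$. Because $C_\delta(x,\lambda)\to C(x,\lambda)$ as $\delta\to0$, we can pick $\delta>0$ and $\varepsilon>0$ with three properties: $x,\lambda\in C_{\delta}$, $\mathrm{dist}(\lambda,\partial C_\delta)>2\varepsilon$, and $\|x-\lambda\|+2\varepsilon<C_\delta(x,\lambda)-2\varepsilon$.

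\textbf{Uniformity in $y$.} Fix a compact set $K\subset\mathbb R^N$ and take $t$ so small that $\phi(t)\sup_{y\in K}\|y\|<\varepsilon$; this uses only $\phi(t)\to0$. Then $y_t\in C_\delta$ for all $y\in K$. Distance functions are $1$-Lipschitz, so $C_\delta(x,\cdot)$ is $1$-Lipschitz, and therefore
$$C_\delta(x,y_t)\ge C_\delta(x,\lambda)-\varepsilon, \qquad \|x-y_t\|\le\|x-\lambda\|+\varepsilon.$$
Consequently $C_\delta(x,y_t)^2-\|x-y_t\|^2\ge\eta$ for some $\eta>0$, uniformly in $y\in K$ and small $t$. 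This step is the only real point of the proof: it is where the strict inequality in Definition \ref{def-regular-pairs} is converted into a uniform positive gap that survives the perturbation $\phi(t)y$.

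\textbf{Conclusion.} With this gap, both right-hand sides are bounded by $\mathrm{const}\cdot t^{-\gamma}e^{-\eta/2t}$, which tends to $0$ as $t\downarrow0$ uniformly on $K$. Let $r_t$ denote the ratio $\tilde\Gamma_k/\tilde\Gamma_0$ at $(t,x,y_t)$. The first bound gives $\liminf_{t\downarrow0} e^{-tM_1}r_t\ge1$, and the second gives $\limsup_{t\downarrow0} e^{-tM_2}r_t\le 1$. Since $e^{-tM_i}\to1$, it follows that $r_t\to1$ uniformly for $y\in K$, which is the claim of the lemma.
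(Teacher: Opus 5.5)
Your proof is correct and follows the same route as the paper: choose $\delta$ so that $x,\lambda+\phi(t)y\in C_\delta$ and $\|x-\lambda\|<C_\delta(x,\lambda)$, divide both estimates of Lemma \ref{roesler-estimate} by the Gaussian $\tilde\Gamma_0$, and let the positive exponential gap send both error bounds to $0$ uniformly on compacta. Your 1-Lipschitz argument for the uniform gap fills in what the paper leaves implicit in the phrase ``tends to a negative constant''. Your exponent $2t$ is what Lemma \ref{roesler-estimate} actually gives, whereas the paper writes $4t$; the difference is harmless for the limit.
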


\begin{proof} Let $C$ be the Weyl chamber with $\lambda$ and $x$ is in its interior. Fix some compactum $K\subset\mathbb R^N$.
  We now choose some $\delta>0$ and $t_0>0$ with $x\in C_\delta$ and $\lambda+\phi(t)y\in C_\delta$ for all $t\in[0,t_0]$ and $y\in K$,
  where we may also assume that
  \begin{equation}\label{est-strong}
      \|x-\lambda\|-C_\delta(x,\lambda)<0.\end{equation}
We then conclude from Lemma
  \ref{roesler-estimate} that for these $t,y$,
$$1-e^{-tM_1}\frac{\tilde\Gamma_k(t,x, \lambda+\phi(t)y)}{\tilde\Gamma_0(t,x, \lambda+\phi(t)y)}
  \le  e^{(\|x-\lambda-\phi(t)y\|^2 -C_\delta(x, \lambda+\phi(t)y)^2)/4t}$$
  and
  $$e^{-tM_2}\cdot \frac{\tilde\Gamma_k(t,x, \lambda+\phi(t)y)}{\tilde\Gamma_0(t,x, \lambda+\phi(t)y)}-1\le
  \frac{c_k}{\delta^{2\gamma} c_0 t^\gamma}  e^{(\|x-\lambda-\phi(t)y\|^2-C_\delta(\lambda, \lambda+\phi(t)y)^2)/4t}.$$
As by \eqref{est-strong}
$$\|x-\lambda-\phi(t)y\|^2 -C_\delta(x, \lambda+\phi(t)y)^2$$
tends to a negative constant for $t\downarrow 0$, both bounds on he right hand sides above tend to
  $0$ for $t\downarrow 0$   uniformly in $y\in K$. Hence, these estimates lead to the claim.
\end{proof}

\begin{proof}[Proof of Proposition \ref{limit-dunkl-kernel}]
If $(x,\lambda)$ is regular, then this result follows immediately from 
Lemma \ref{short-time} with  $1/t$ instead of $t$ and 
the definition of the kernels $\tilde\Gamma_k$ above. The general case then follows by a renormalization of $t$ and property (2)
of the Dunkl kernel in Section 1.
\end{proof}

We next turn to the proof of Theorem \ref{clt-dunkl-general}.  We here need the following  fact:

\begin{lemma}\label{mean-prop} If $(\lambda,x)$ is regular, then $((x+\lambda)/2,x)$ and $((x+\lambda)/2,\lambda)$  are regular.
\end{lemma}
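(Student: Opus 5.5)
Set $m:=(x+\lambda)/2$ and let $C$ be the Weyl chamber containing $x$ and $\lambda$ in its interior. The plan is to use convexity of $C$, concavity of the distance function $d(z):=dist(z,\partial C)$ on $C$, and the fact that $m$ halves the distance to each endpoint.

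\emph{Step 1: $m$ lies in the interior of $C$ and is regular.} The open chamber is convex, being an intersection of open half-spaces $\{\langle z,\alpha\rangle>0\}$, $\alpha\in R_+$. Hence $m$ lies in it, so it is regular and in the same chamber as $x$ and $\lambda$.

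\emph{Step 2: concavity of $d$.} For $z$ in the interior of $C$,
$$d(z)=\min_{\alpha\in R_+}\frac{\langle z,\alpha\rangle}{\|\alpha\|}.$$
It suffices to take the minimum over the simple roots; the distance to a face-hyperplane equals the distance to the face, because the nearest point of the hyperplane lies in $C$ when $z\in C$. If this is delicate, I would use a weaker statement instead. If the closed ball $B(z,r)$ lies in $C$, then $r\le d(z)$, and conversely; this gives the same conclusion by convexity. Either way, $d$ is a minimum of linear functions and hence concave, so
$$d(m)\ge \tfrac12\bigl(d(x)+d(\lambda)\bigr)\ge \tfrac12\min\bigl(d(x),d(\lambda)\bigr).$$
A safer route is directly via balls: $B(x,d(x))\subset C$ and $B(\lambda,d(\lambda))\subset C$, so by convexity of $C$ we get $B(m,(d(x)+d(\lambda))/2)\subset C$.

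\emph{Step 3: verify the inequalities.} We have $\|m-x\|=\|m-\lambda\|=\tfrac12\|x-\lambda\|$. By hypothesis $\|x-\lambda\|<\max(d(x),d(\lambda))$, so
$$\|m-x\|<\tfrac12\max\bigl(d(x),d(\lambda)\bigr).$$
We need $\|m-x\|<\max(d(m),d(x))$. Suppose without loss of generality that $d(x)\ge d(\lambda)$. Then $\|m-x\|<\tfrac12 d(x)\le d(x)\le \max(d(m),d(x))$, and this case is done. For the pair $(m,\lambda)$ in the same case, we have $\|m-\lambda\|<\tfrac12 d(x)$ and need this to be less than $\max(d(m),d(\lambda))$. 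Here Step 2 gives $d(m)\ge \tfrac12(d(x)+d(\lambda))\ge \tfrac12 d(x)>\|m-\lambda\|$. The symmetric case $d(\lambda)\ge d(x)$ is identical.

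The main obstacle is Step 2, the concavity or midpoint bound for $d$. The ball-convexity argument handles it cleanly, so I would present that version. It also implicitly covers the claim that a ball of radius $r$ around $z$ lies in $C$ exactly when $r\le dist(z,\partial C)$ for interior $z$.
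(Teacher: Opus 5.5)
Your proof is correct and follows the paper's argument. After a WLOG on which of $dist(x,\partial C)$, $dist(\lambda,\partial C)$ is larger, the pair containing the point farther from $\partial C$ follows directly from $\|m-x\|=\|m-\lambda\|=\|x-\lambda\|/2$ with $m=(x+\lambda)/2$, and the other pair uses the midpoint bound $dist(m,\partial C)\ge\frac12\bigl(dist(x,\partial C)+dist(\lambda,\partial C)\bigr)$, which comes from $C$ being an intersection of half-spaces. Your ball-convexity justification is actually more accurate than the paper's, which states this bound as an equality; in general only your inequality holds, and it is all that is needed.
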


\begin{proof} Clearly, $x+\lambda$ is also contained in the interior of the Weyl chamber $C$ which contains $x,\lambda$.
  Assume now w.l.o.g.~that $dist(x,\partial C)\le  dist(\lambda,\partial C)$.
  Then
  $$\|(x+\lambda)/2-\lambda\| = \|x-\lambda\|/2 <  dist(\lambda,\partial C)$$
  which shows that  $((x+\lambda)/2,\lambda)$ is regular. Moreover, as $C$ is an intersection of halfspaces,
  $$\|(x+\lambda)/2-x\|  = \|x-\lambda\|/2 < \bigl( dist(x,\partial C)+ dist(\lambda,\partial C)\bigr)/2 = dist((x+\lambda)/2,\partial C),$$
  which shows that $((x+\lambda)/2,x)$ is regular.
\end{proof}

\begin{proof}[Proof of Theorem \ref{clt-dunkl-general}]
 Let  $(X_t^{\lambda, x})_{t\ge0}$ be a Dunkl process
with multiplicity $k\ge0$,     drift  $\lambda$,  and start in $x\in\mathbb R^N$ such that $(\lambda,x)$ is regular.
Using the transition probabilities \eqref{density-transition-dunkl-drift-gen}, we see that
for   $t>0$, the random variable
     $(X_t^{\lambda, tx}-t(\lambda+x))/\sqrt t$  has the Lebesgue-density
\begin{align}\label{density-transition-dunkl-drift-gen-mod}    
 & \frac{ c_k\cdot e^{-\|\lambda\|^2 t/2}}{t^{\gamma}}  e^{-(t^2\|x\|^2+\| t(\lambda+x)+  \sqrt t\cdot y\|^2)/(2t)}
 E_k\Bigl(\frac{tx}{\sqrt{t}}, \frac{    t(\lambda+x)+  \sqrt t\cdot y}{\sqrt{t}}\Bigr) \\
 &\quad \cdot \frac{ E_k( t(\lambda+x)+  \sqrt t\cdot y,\lambda) }{E_k(tx,\lambda)}
 \cdot w_k(t(\lambda+x)+  \sqrt t\cdot y ) \notag\\
 =& c_k e^{-t(\|\lambda\|^2+\|x\|^2+\|\lambda+x\|^2)/2} 
 \cdot \frac{E_k(x,  t(\lambda+x)+  \sqrt t\cdot y) E_k(\lambda,  t(\lambda+x)+  \sqrt t\cdot y) }{E_k(tx,\lambda)}
 \notag\\
 &\quad \cdot
 w_k( \sqrt t(\lambda+x)+  y)\cdot
e^{- \sqrt t\langle \lambda+x,y\rangle}  e^{-\|y\|^2/2}
\notag\end{align}
where the factor $t^{N/2}$ disappeared by the transformation formula, and where we used property (2) of $E_k$ in Section 1 and the homogeneity of $w_k$ for the ``$=$'' in \eqref{density-transition-dunkl-drift-gen-mod}. Using Lemma \ref{mean-prop}, we can now apply
Proposition \ref{limit-dunkl-kernel} to the three $E_k$  on the right hand side of 
\eqref{density-transition-dunkl-drift-gen-mod} with the function $\phi(t)=t^{-1/2}$ and  $\phi(t)=0$ respectively.
This and an elementary calculation imply that for $t\to\infty$, these densities converge locally uniformly in $y\in\mathbb R^N$ to
the density $(2\pi)^{-N/2} e^{-\|y\|^2/2}$ of $N(0,I_N)$. 
This means that  the distributions of $(X_t^{\lambda, tx}-t(\lambda+x))/\sqrt t$  tend vaguely and thus weakly to $N(0,I_N)$ as claimed.
\end{proof}

\begin{proof}[Proof of Theorem \ref{clt-dunkl}]
  Similar to \eqref{density-transition-dunkl-drift-gen-mod} in the preceding proof we here see that
  $(X_t^{\lambda}-t\lambda)/\sqrt t$ has the density
\begin{equation}\label{density-transition-dunkl-drift-gen-mod2}    
  c_k e^{-t\|\lambda\|^2} e^{-\|x\|^2/(2t)}e^{-\|y\|^2/2}e^{-\sqrt t \langle \lambda,y\rangle}
w_k( \sqrt t \cdot \lambda+  y) 
 \cdot \frac{E_k(x,  \lambda+  y/ \sqrt t) E_k(\lambda,  t\lambda+  \sqrt t\cdot y) }{E_k(x,\lambda)}.
 \end{equation}
We now apply Proposition \ref{limit-dunkl-kernel} to $E_k(\lambda,  t\lambda+  \sqrt t\cdot y) $ and use that
$$\frac{E_k(x,  \lambda+  y/ \sqrt t)}{E_k(x,\lambda)}\to1$$
for $t\to\infty$ locally uniformly in $y$. This implies again that
the distributions of $(X_t^{\lambda, tx}-t\lambda)/\sqrt t$  tend vaguely and thus weakly to $N(0,I_N)$..
\end{proof}

In order to derive the CLT \ref{clt-bessel} from that for Dunkl processes with drift,
we use the following fact on the space  $M^1(\mathbb R^N)$  of all
probability measures on $\mathbb R^N$.

\begin{lemma}\label{weak-convergence-lemma}
  Let $(\mu_t)_{t\ge0}\subset M^1(\mathbb R^N)$ be a familiy of probability measures of the form
  $\mu_t=\tilde\mu_t|_{C_t}+\hat\mu_t$ with  $(\tilde\mu_t)_{t\ge0}\subset M^1(\mathbb R^N)$
  converging weakly to some $\mu\in M^1(\mathbb R^N)$ for $t\to\infty$, $(C_t)_{t \ge0}$ a family of open subsets on
  $\mathbb R^N$ with $C_s\subset C_t$ for $s<t$ and $\bigcup_{t\ge0} C_t=\mathbb R^N$,
  and with (nonnegative) sub-probability measures $\hat\mu_t:=\mu_t-\tilde\mu_t|_{C_t}$ ($t\ge0$).
  Then  $(\mu_t)_{t\ge0}$ also converges weakly to  $\mu$ for $t\to\infty$.
  \end{lemma}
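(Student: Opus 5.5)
We show that $\int f\,d\mu_t\to\int f\,d\mu$ for every bounded continuous $f$. It is enough to prove that the total mass of $\hat\mu_t$ tends to $0$ and that $\tilde\mu_t|_{C_t}$ converges weakly to $\mu$. Both facts come from one mass-counting argument.

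\emph{Step 1: $\tilde\mu_t(\mathbb R^N\setminus C_t)\to0$.} Fix $\varepsilon>0$. Since $\mu$ is a single probability measure, there is a compact $K$ with $\mu(K)>1-\varepsilon$. The $C_t$ are open, increase in $t$ and cover $\mathbb R^N$, so they cover $K$. By compactness finitely many of them cover $K$, and by monotonicity a single $C_{t_0}\supset K$. Choose an open $U$ with $K\subset U$ and $\overline U\subset C_{t_0}$; this is possible because $K$ is compact and $C_{t_0}$ is open. The Portmanteau theorem gives
\[
\liminf_{t\to\infty}\tilde\mu_t(U)\ge\mu(U)\ge \mu(K)>1-\varepsilon .
\]
For $t\ge t_0$ we have $U\subset C_t$, so $\tilde\mu_t(\mathbb R^N\setminus C_t)\le 1-\tilde\mu_t(U)$. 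Hence $\limsup_t \tilde\mu_t(\mathbb R^N\setminus C_t)\le\varepsilon$, and $\varepsilon$ was arbitrary.

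\emph{Step 2: the mass of $\hat\mu_t$ vanishes.} Since $\mu_t$ and $\tilde\mu_t$ are probability measures,
\[
\hat\mu_t(\mathbb R^N)=1-\tilde\mu_t(C_t)=\tilde\mu_t(\mathbb R^N\setminus C_t)\to0
\]
by Step 1. This is where nonnegativity of $\hat\mu_t$ is used: it turns the vanishing of total mass into vanishing in total variation.

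\emph{Step 3: conclusion.} For bounded continuous $f$,
\[
\Bigl|\int f\,d\mu_t-\int f\,d\tilde\mu_t\Bigr|\le \|f\|_\infty\bigl(\tilde\mu_t(\mathbb R^N\setminus C_t)+\hat\mu_t(\mathbb R^N)\bigr)\to0 .
\]
Since $\int f\,d\tilde\mu_t\to\int f\,d\mu$ by assumption, $\int f\,d\mu_t\to\int f\,d\mu$, which proves the lemma.

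The only nontrivial step is Step 1, namely passing from compact sets to the increasing open $C_t$ while keeping a lower bound on mass under weak convergence. The Portmanteau inequality for open sets, applied to an open neighbourhood of a large compact set, handles this. Tightness of $(\tilde\mu_t)$ is not needed separately, because the limit $\mu$ is a genuine probability measure.
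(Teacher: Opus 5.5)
Your proof is correct and follows the paper's route: show $\tilde\mu_t(C_t)\to 1$ because a fixed open $C_{t_0}$ carries almost all of the mass of $\mu$, deduce that the total mass of $\hat\mu_t$ tends to $0$, and conclude. The differences are cosmetic. You use the Portmanteau inequality where the paper uses an explicit continuous cutoff supported in $C_{t_0}$. You test directly against bounded continuous $f$ rather than proving vague convergence and upgrading it. Your auxiliary $K$ and $U$ are unnecessary, since Portmanteau applies to the open set $C_{t_0}$ itself.
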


\begin{proof}
  We first check that $\lim_{t\to\infty}\tilde\mu_t(C_t)=1$. In fact, for each $\epsilon>0$,
  we have $\mu(C_t)\ge1-\epsilon$ for $t\ge t_0$ with some $t_0=t_0(\epsilon)$.
  We next choose some continuous function
  $f:\mathbb R^N\to[0,1]$ with $supp\> f\in C_{t_0}$ with $\int_{\mathbb R^NN}f\> d\mu\ge 1-2\epsilon$.
  Hence, by our assumptions, we find $t_1\ge t_0$ such that for $t\ge t_1$ we have
  $\int_{\mathbb R^N}f\> d\tilde\mu_t\ge 1-3\epsilon$ and thus
  $$\tilde\mu_t(C_t)\ge\tilde\mu_t(C_{T_0})\ge 1-3\epsilon.$$
  Hence,  $\lim_{t\to\infty}\tilde\mu_t(C_t)=1$ and thus  $\lim_{t\to\infty}\hat\mu_t(C_t)=0$.

  Now let $f:\mathbb R^N\to \mathbb R$ be a  continuous function with compact support.
  Then $supp \>f \subset C_T$
  for large $t$ by a compactness argument. We thus conclude that
  $$\lim_{t\to\infty}\int_{\mathbb R^N}f\> d\mu_t=\lim_{t\to\infty}\int_{\mathbb R^N}f\> d\tilde\mu_t
  + \lim_{t\to\infty}\int_{\mathbb R^N}f\> d\hat\mu_t=\int_{\mathbb R^N}f\>d\mu +0.$$
  Therefore, the familily  $(\mu_t)_{t\ge0}$ of probability measures converges vaguely and thus weakly
  to $\mu$ as claimed.
  \end{proof}

  \begin{proof}[Proof of Theorem \ref{clt-bessel}]
    Let $\lambda$ in the interior of the chamber $C$. Let $(X_t^{\lambda,D})_{t\ge0}$ be a
    Dunkl process on $\mathbb R^N$ with start in $0$ and  $(X_t^{\lambda,B})_{t\ge0}$  a Bessel process on $C$
with start in $x\in C$, where both processes have drift $\lambda$.
As in the proof of Theorems \ref{clt-dunkl-general} and \ref{clt-dunkl}, we conclude from \eqref{density-transition-dunkl-drift-gen}
and \eqref{density-transition-bessel-drift-gen}  that the random variables
$(X_t^{\lambda, D}-t\lambda)/\sqrt t$  and $(X_t^{\lambda, B}-t\lambda)/\sqrt t$ have the distributions
\begin{equation}
d\tilde\mu_t(y):= H_{t,\lambda}(y) \cdot
E_k(\lambda,t\lambda+\sqrt t\cdot y)\> dy \in M^1(\mathbb R^N)\end{equation}
and
\begin{equation}d\mu_t(y):= |W| \cdot H_{t,\lambda}(y)\cdot\frac{J_k(x,  \lambda+  y/ \sqrt t)}{J_k(x,\lambda)}
  J_k(\lambda,t\lambda+\sqrt t\cdot y)\> dy|_{C_t} \in M^1(C_t)
\end{equation}
respectively with the translated cones
$C_t:=C-\sqrt t\cdot\lambda$ and with
$$H_{t,\lambda}(y)= c_k\cdot e^{-\|\lambda\|^2 t/2}\cdot  e^{-\|t\lambda +\sqrt t\cdot y\|^2/(2t)} w_k(\sqrt t\cdot \lambda+y).$$
We also consider the probability measures
\begin{align}
  d\check\mu_t(y):=& |W| \cdot H_{t,\lambda}(y)\cdot
  J_k(\lambda,t\lambda+\sqrt t\cdot y)\> dy|_{C_t} \in M^1(C_t)
  \\
  =& {\bf 1}_{C_t}\cdot H_{t,\lambda}(y)\cdot
  E_k(\lambda,t\lambda+\sqrt t\cdot y)\> dy \notag\\
  &\quad +{\bf 1}_{C_t}\cdot H_{t,\lambda}(y)\cdot \sum_{g\in W, g\ne e} E_k(\lambda,t\lambda+\sqrt t\cdot y)\> dy \notag\\
  =&d\tilde\mu_t(y)|_{C_t} +d\hat\mu_t(y)\notag
\end{align}
 with some positve subprobability $\hat\mu_t$. As the cones $C_t$ and
  the probability measures $\tilde\mu_t$ satisfy the conditions
  of Lemma \ref{weak-convergence-lemma} by the CLT \ref{clt-dunkl} with $\mu=N(0,I_N)$ as limit,
  we obtain that the $\check\mu_t$ also tend weakly to $N(0,I_N)$. As
  $$\frac{J_k(x,  \lambda+  y/ \sqrt t)}{J_k(x,\lambda)}\to 1$$
  locally uniformly in $y$, it follows easily that  the probability measures $\mu_t$ also tend 
 weakly to $N(0,I_N)$ as claimed.
  \end{proof}

  We expect that  the CLT \ref{clt-dunkl-general} for Dunkl processes can be also transfered to Bessel processes.
  As here means of the Dunkl kernels $E_k$ over the Weyl group $W$ have to be taken also in the denominators
  in  the transformed densities \eqref{density-transition-bessel-drift-gen}, this is more involved than in the preceding proof.

\begin{example} In the end of this paper
  we show that for $N=1$, $k>0$, drift $\lambda>0$ and starting point $x<0$, the CLT \ref{clt-dunkl-general} is
not correct. The same would also hold for $\lambda<0$ and  $x>0$. In fact,
it is known (see e.g. \cite{R1}) that here
  $$E_k(x,\lambda)= e^{x \lambda}\>_1F_1(k,2k+1; -2x \lambda).$$
The Kummer transformation for $_1F_1$ together with a   limit relation for 
  $_1F_1(a,b;z)$ for $z\to\infty$ (see 13.2.39 and  13.2.23 of \cite{NIST})
now yield
  \begin{align}\label{limit-1f1}
    E_k(x,\lambda)\sim& \frac{\Gamma(2k+1)}{\Gamma(k+1)2^k} e^{x \lambda}\cdot (x\lambda)^{-k} \quad\quad\text{for}\quad\quad x\to\infty
 \quad\quad\text{and}   \notag\\
 E_k(x,\lambda)\sim& \frac{\Gamma(2k+1)}{\Gamma(k)2^{k+1}} e^{-x \lambda}\cdot (-x\lambda)^{-(k+1)}
 \quad\quad\text{for}\quad\quad x\to-\infty.\end{align}
These results were also used in  \cite{V}. 
This implies readily that for  $\lambda>0$ and starting point $x<0$, the limit $t\to\infty$ of the densities
\eqref{density-transition-dunkl-drift-gen-mod} does not exist in particular due to the fact that the exponential terms
do not cancel anymore mainly due to the different exponential terms on the right hand sides of \eqref{limit-1f1} for $x>0$ and $x<0$.
One might suspect that this failure can be repaired by considering slightly modified limits for the random variables  like 
$(X_t^{\lambda, tx}-st)/(c\sqrt t)$ for  suitable $s=s(x,\lambda)$, $c=c(x,\lambda)>0$. Unfortunately, this approach still does not repair
the  additional problems with the powers $k$ and $k+1$ on the right hand sides of \eqref{limit-1f1}.
\end{example}

\end{document}